\theoremstyle{plain}
\newtheorem{theorem}{Theorem}[section]
\newtheorem{lemma}[theorem]{Lemma}
\theoremstyle{definition}
\newtheorem{definition}[theorem]{Definition}
\newtheorem{remark}[theorem]{Remark}
\newtheorem{cor}[theorem]{Corollary}
\theoremstyle{remark}
\begin{document}

\title [Birkhoff-James orthogonality to a subspace]{Birkhoff-James orthogonality to a subspace of operators defined between Banach spaces}

\author[Arpita Mal and Kallol Paul ]{Arpita Mal and Kallol Paul}

\address[Mal]{Department of Mathematics\\ Jadavpur University\\ Kolkata 700032\\ India}
\email{arpitamalju@gmail.com}

\address{(Paul)Department of Mathematics, Jadavpur University, Kolkata 700032, India}
\email{kalloldada@gmail.com}

\thanks{Miss Arpita Mal would like to thank UGC, Govt. of India for the financial support. Prof. Kallol Paul would like to thank RUSA 2.0, Jadavpur University for the financial support.}

\subjclass[2010]{Primary 47L05, Secondary 46B20}
\keywords{ Birkhoff-James orthogonality; linear operators; subspace; numerical radius}



\date{}
\maketitle
\begin{abstract}
This paper deals with study of Birkhoff-James orthogonality of a linear operator to a subspace of operators defined between arbitrary Banach spaces. In case the domain space is reflexive and the subspace is finite dimensional we obtain a complete characterization. For arbitrary Banach spaces, we obtain the same under some additional conditions. For arbitrary Hilbert space $ \mathbb{H},$ we also study orthogonality to subspace of the space of linear operators $L(\mathbb{H}), $ both with respect to operator norm as well as numerical radius norm.
\end{abstract}

\section{Introduction}
Geometry of Banach space is an enriched area of research. Nowadays Birkhoff-James orthogonality is drawing attention to the mathematicians working in this area because of its importance in the study of geometry of Banach space. Recently in \cite{BS1,P,PHD,SPM}, Birkhoff-James orthogonality has been studied in operator spaces. The purpose of this paper is to obtain characterization of Birkhoff-James orthogonality of a bounded linear operator defined between arbitrary Banach spaces to an arbitrary subspace of operator space.

 Suppose $\mathbb{X},\mathbb{Y}$ denote Banach spaces and $\mathbb{H}$ denotes Hilbert space over $K$ $ (\mathbb{R}~\text{or } \mathbb{C}).$ $B_{\mathbb{X}}$ and $S_{\mathbb{X}}$ denote the unit ball and the unit sphere of $\mathbb{X}$ respectively, i.e., $B_{\mathbb{X}}=\{x\in\mathbb{X}:\|x\|\leq 1\}$ and $S_{\mathbb{X}}=\{x\in\mathbb{X}:\|x\|= 1\}.$ The space of all bounded (compact) linear operators between $\mathbb{X}$ and $\mathbb{Y}$ is denoted by $\mathbb{L}(\mathbb{X},\mathbb{Y}) ~ (\mathbb{K}(\mathbb{X},\mathbb{Y})).$ $\mathbb{L}(\mathbb{H})~(\mathbb{K}(\mathbb{H}))$ denotes the space of all bounded (compact) linear operators from $\mathbb{H}$ to $\mathbb{H}.$ In particular, if $\mathbb{H}=K^n$ with usual inner product on $K^n,$ then we denote $\mathbb{L}(\mathbb{H})$ by $M_n(K).$ $\mathbb{X}^*$ denotes the space of all bounded linear functionals on $\mathbb{X}.$ For $x(\neq 0)\in \mathbb{X},$ let $J(x)=\{f\in S_{\mathbb{X}^*}:f(x)=\|x\|\}$ denote the set of all supporting linear functionals of $x.$ Observe that $J(x)$ is a non-empty, weak* compact, convex subset of $S_{\mathbb{X}^*}.$ If for each $x\in S_{\mathbb{X}},~J(x)$ is singleton, then $\mathbb{X}$ is said to be a smooth space. For a convex set $A,$ the set of all extreme points of $A$ is denoted by $Ext(A).$  If $x,y\in \mathbb{X},$ then  $x$ is said to be  Birkhoff-James  orthogonal  \cite{B,J}  to $y,$ written as $x\perp_B y,$ if $\|x+\lambda y\|\geq \|x\|$ for all $\lambda \in K.$ For $T,A\in \mathbb{L}(\mathbb{X},\mathbb{Y}),~T\perp_B A$ if  $\|T+\lambda A\|\geq \|T\|$ for all $\lambda \in K.$  If $T\in \mathbb{L}(\mathbb{X},\mathbb{Y})$ and $\mathcal{W}$ is a subspace of $\mathbb{L}(\mathbb{X},\mathbb{Y}),$ then $T$ is said to be Birkhoff-James orthogonal to $\mathcal{W},$ written as $T\perp_B \mathcal{W},$ if $T\perp_B A$ for all $A\in \mathcal{W}.$

In 1999, Bhatia and \v{S}emrl \cite{BS1} and Paul \cite{Pa} independently characterized $T\perp_B A,$ whenever $T,A\in \mathbb{B}(\mathbb{H}).$  Later on, in \cite{SPM} Sain et. al. generalized the result and characterized $T\perp_B A,$ whenever $T,A\in \mathbb{L}(\mathbb{X},\mathbb{Y}).$ Considering some special subspace $\mathcal{W}$ of $\mathbb{L}(\mathbb{H})$ and some particular operator $T\in \mathbb{L}(\mathbb{H}),$ Andruchow et. al. in \cite{ALRV} obtained characterization for $T\perp_B \mathcal{W}.$ In \cite[Th. 1]{G}, Grover characterized $T\perp_B \mathcal{W},$ whenever $T\in M_n(\mathbb{C})$ and $\mathcal{W}$ is a subspace of $M_n(\mathbb{C}).$ In this paper, our goal is to study  Birkhoff-James orthogonality of an operator $T\in \mathbb{L}(\mathbb{X},\mathbb{Y})$ to any subspace of $\mathbb{L}(\mathbb{X},\mathbb{Y})$ in the setting of arbitrary Banach spaces $\mathbb{X}$ and $\mathbb{Y}.$ We first characterize $T\perp_B \mathcal{W},$ whenever $T\in \mathbb{L}(\mathbb{X},\mathbb{Y})$ and $\mathcal{W}$ is a finite dimensional subspace of $\mathbb{L}(\mathbb{X},\mathbb{Y}),$ where $\mathbb{X}$ is a reflexive Banach space and $\mathbb{Y}$ is a finite dimensional Banach space. If $\mathbb{X}$ and $\mathbb{Y}$ are arbitrary Banach spaces and $\mathcal{W}$ is arbitrary subspace of $\mathbb{L}(\mathbb{X},\mathbb{Y}),$ then we characterize $T\perp_B \mathcal{W}$ under some suitable conditions. We  also characterize Birkhoff-James orthogonality of $T\in \mathbb{L}(\mathbb{H})$ to a subspace of $\mathbb{L}(\mathbb{H})$ in case of infinite dimensional Hilbert space $\mathbb{H}.$ The Theorems \cite[Th. 1]{ALRV}, \cite[Th. 1]{G} and \cite[Th. 2.1(a)]{W2} then follow from our results obtained here.

For an operator $T\in \mathbb{L}(\mathbb{H}),$ the numerical radius of $T,$ denoted by $w(T),$ is defined as $w(T)=\sup\{|\langle Tx,x\rangle|:x\in S_{\mathbb{H}}\}.$ Note that, if $\mathbb{H}$ is a complex Hilbert space, then the numerical radius $w(.)$ of an operator defines a norm on $\mathbb{L}(\mathbb{H}).$ In the space $(\mathbb{L}(\mathbb{H}),w(.)),$ we obtain a sufficient condition for Birkhoff-James orthogonality of an operator to a subspace.

 In due course of time we will see that the norm attainment set of an operator plays an important role in characterizing Birkhoff-James orthogonality of operators. For $T\in \mathbb{L}(\mathbb{X},\mathbb{Y}),$ the set of all unit vectors at which $T$ attains norm is denoted by $M_T,$ i.e., $M_T=\{x\in S_{\mathbb{X}}:\|Tx\|=\|T\|\}.$ We also need the notions of semi-inner-product \cite{Gi,L} and $M-$ideal \cite{HWW}, mentioned below.

\begin{definition}
	Let $ \mathbb{X} $ be a normed space. A function $ [ ~,~ ] : \mathbb{X} \times \mathbb{X} \to K $ is a semi-inner-product (s.i.p.) if for any $ \alpha,~\beta \in K $ and for any $ x,~y,~z \in \mathbb{X}, $ it satisfies the following:\\
	$ (a) $ $ [\alpha x + \beta y, z] = \alpha [x,z] + \beta [y,z], $\\
	$ (b) $ $ [x,x] > 0, $ whenever $ x \neq 0, $\\
	$ (c) $ $ |[x,y]|^{2} \leq [x,x] [y,y], $\\
	$ (d) $ $ [x,\alpha y] = \overline{\alpha} [x,y]. $
\end{definition} 

From \cite{Gi} it follows that in each normed space $\mathbb{X},$ there is a s.i.p. $[,]$ such that for all $x\in \mathbb{X}, ~[x,x]=\|x\|^2.$ In this case, the corresponding s.i.p. is said to be compatible with the norm. In general, there can be many s.i.p. compatible with the norm. In a smooth normed space, there is a unique s.i.p. compatible with the norm. In a Hilbert space, the only s.i.p. is the inner product itself. 

If for a linear projection $P$ on $\mathbb{X},$ $\|x\|=\|Px\|+\|x-Px\|$ holds for all $x\in \mathbb{X},$ then $P$ is called an $L-$projection.  A closed subspace $J$ of $\mathbb{X}$ is called an $L-$summand if $J$ is the range of an $L-$projection and $J$ is called an  $M-$ideal if $J^0$ is an $L-$summand of $\mathbb{X}^*,$ where $J^0=\{f\in \mathbb{X}^*:f(x)=0 ~\forall ~x\in J\}.$ For further information on  $M-$ideals, one can go through \cite{HWW}.

\section{Main results}
We begin this section with the following three known  lemmas. 

\begin{lemma}\cite[Lemma 1.1, pp. 166]{S}\label{lemma-singer}
	Let $\mathbb{X}$ be a Banach space of finite dimension $k$ and $f\in S_{\mathbb{X}^*}.$ Then there exist $f_1,f_2,\ldots,f_h\in Ext(B_{\mathbb{X}^*}),$ where $1\leq h\leq k$ if the scalars are real and $1\leq h\leq 2k-1$ if the scalars are complex and scalars $\lambda_1,\lambda_2,\ldots,\lambda_h>0$ such that $\sum_{i=1}^h\lambda_i=1$ and $f=\sum_{i=1}^h\lambda_if_i.$
\end{lemma}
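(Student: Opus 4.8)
The plan is to deduce this from the finite-dimensional Krein–Milman (Minkowski) theorem together with Carathéodory's theorem, the only real work being the sharp dimension count that yields $h\leq k$ in the real case and $h\leq 2k-1$ in the complex case, rather than the naive $k+1$ and $2k+1$.

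First I would record the ambient facts. Since $\dim\mathbb{X}=k,$ the dual $\mathbb{X}^*$ is finite dimensional and $B_{\mathbb{X}^*}$ is a compact convex set; by Minkowski's theorem it is the convex hull of $Ext(B_{\mathbb{X}^*}).$ In particular $f\in S_{\mathbb{X}^*}\subset B_{\mathbb{X}^*}$ is already a convex combination of extreme points, and a direct appeal to Carathéodory would give a bound of $k+1$ (real) or $2k+1$ (complex), viewing $\mathbb{X}^*$ as a real vector space of dimension $k$ or $2k.$ To sharpen this I exploit that $f$ lies on the sphere, i.e.\ on the boundary of $B_{\mathbb{X}^*}.$

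Next I would pass to a supporting hyperplane. Because $\mathbb{X}$ is finite dimensional, $\|f\|=1$ is attained: by compactness of $B_{\mathbb{X}}$ there is $x_0\in S_{\mathbb{X}}$ with $|\langle f,x_0\rangle|=1,$ and after multiplying by a unimodular scalar I may assume $\langle f,x\rangle=1$ for some $x\in S_{\mathbb{X}}.$ Set $F=\{g\in B_{\mathbb{X}^*}:\mathrm{Re}\,\langle g,x\rangle=1\}.$ This is the intersection of $B_{\mathbb{X}^*}$ with a supporting (real) hyperplane, hence a face of $B_{\mathbb{X}^*},$ so every extreme point of $F$ is an extreme point of $B_{\mathbb{X}^*};$ moreover $f\in F.$

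The crucial step is to pin down the dimension of the affine hull of $F.$ In the real case $F$ sits in the affine hyperplane $\{g:\langle g,x\rangle=1\}$ of $\mathbb{X}^*,$ of dimension $k-1.$ In the complex case I claim $F$ actually lies in the complex-affine set $\{g:\langle g,x\rangle=1\}$ of real dimension $2k-2$: indeed, for $g\in F$ write $\langle g,x\rangle=1+it,$ so that $|\langle g,x\rangle|^2=1+t^2\leq\|g\|^2\|x\|^2\leq 1,$ which forces $t=0.$ Thus $F$ is a compact convex set whose affine hull has dimension $k-1$ (real) or $2k-2$ (complex). Applying Carathéodory's theorem to $f$ inside this affine hull expresses $f$ as a convex combination of at most $(k-1)+1=k,$ respectively $(2k-2)+1=2k-1,$ extreme points of $F$ — which are extreme points of $B_{\mathbb{X}^*}$ — and this is exactly the asserted bound. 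I expect the main obstacle to be precisely this last dimension bookkeeping in the complex case; the Cauchy–Schwarz argument forcing the imaginary part of $\langle g,x\rangle$ to vanish on $F$ is what buys the improvement from $2k$ to $2k-1.$
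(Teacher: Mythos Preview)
The paper does not actually prove this lemma: it is quoted verbatim from Singer's monograph \cite{S} and used as a black box, so there is no in-paper argument to compare against. Your proof, however, is correct and is essentially the classical argument behind the cited result: pass to a supporting face $F$ of $B_{\mathbb{X}^*}$ through $f$, observe that extreme points of $F$ are extreme points of $B_{\mathbb{X}^*}$, and apply Carath\'eodory inside the affine hull of $F$. The key sharpening---that in the complex case $F$ lies in the \emph{complex} affine hyperplane $\{g:\langle g,x\rangle=1\}$, cutting the real affine dimension to $2k-2$ rather than $2k-1$---is exactly right, and your $1+t^2\leq 1$ computation is the clean way to see it. One cosmetic point: when you write ``after multiplying by a unimodular scalar I may assume $\langle f,x\rangle=1$,'' make explicit that you are rotating $x_0$ (not $f$), since altering $f$ would change the object under study; the intended meaning is clear but the phrasing is ambiguous.
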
 

\begin{lemma}\cite[Lemma 1.1, pp. 168]{S}\label{lemma-singer2}
	Let $\mathbb{X}$ be a Banach space and $\mathcal{Z}$ is a subspace of $\mathbb{X}.$ Let $f\in Ext(B_{\mathcal{Z}^*}).$ Then there exists $g\in Ext(B_{\mathbb{X}^*})$ such that $g|_{\mathcal{Z}}=f.$
\end{lemma}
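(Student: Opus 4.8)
The plan is to realize the desired extension as an extreme point of the weak*-compact convex set of all norm-preserving extensions of $f$, and then to upgrade its extremality from this set to the whole ball $B_{\mathbb{X}^*}$. First I would record that, since $f\in Ext(B_{\mathcal{Z}^*})$, we have $\|f\|=1$, and set
$$ E=\{g\in B_{\mathbb{X}^*}: g|_{\mathcal{Z}}=f\}. $$
By the Hahn-Banach theorem $f$ admits a norm-preserving extension, so $E\neq \emptyset$; and for any $g\in E$ one has $1=\|f\|=\|g|_{\mathcal{Z}}\|\leq \|g\|\leq 1$, so in fact every element of $E$ is a norm-preserving extension lying on $S_{\mathbb{X}^*}$.

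Next I would check that $E$ is convex and weak*-compact. Convexity is immediate, since the restriction map $g\mapsto g|_{\mathcal{Z}}$ is linear. For compactness, note that $B_{\mathbb{X}^*}$ is weak*-compact by the Banach--Alaoglu theorem, that the restriction map is weak*-to-weak* continuous (for each $z\in\mathcal{Z}$ the evaluation $g\mapsto g(z)$ is weak* continuous, and these evaluations generate the weak* topology on $\mathcal{Z}^*$), and that $E$ is the preimage of the weak*-closed singleton $\{f\}$. Hence $E$ is a weak*-closed subset of a weak*-compact set, and by the Krein--Milman theorem it possesses an extreme point $g_0$, which by construction satisfies $g_0|_{\mathcal{Z}}=f$.

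It remains --- and this is the only genuine point --- to verify that $g_0$ is extreme not merely in $E$ but in $B_{\mathbb{X}^*}$. Suppose $g_0=\tfrac{1}{2}(g_1+g_2)$ with $g_1,g_2\in B_{\mathbb{X}^*}$. Restricting to $\mathcal{Z}$ yields $f=\tfrac{1}{2}\big(g_1|_{\mathcal{Z}}+g_2|_{\mathcal{Z}}\big)$, a convex combination of the two elements $g_1|_{\mathcal{Z}},\,g_2|_{\mathcal{Z}}\in B_{\mathcal{Z}^*}$. Since $f\in Ext(B_{\mathcal{Z}^*})$, this forces $g_1|_{\mathcal{Z}}=g_2|_{\mathcal{Z}}=f$, so both $g_1$ and $g_2$ lie in $E$; but $g_0$ is extreme in $E$, whence $g_1=g_2$. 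Therefore $g_0\in Ext(B_{\mathbb{X}^*})$, and taking $g=g_0$ finishes the argument. The main obstacle is exactly this transfer of extremality, which succeeds only because the extremality of $f$ in the smaller ball traps any decomposition of $g_0$ back inside $E$.
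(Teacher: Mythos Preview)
Your proof is correct and is the standard argument for this classical result. Note, however, that the paper does not supply its own proof of this lemma: it is quoted from Singer's monograph as one of three preliminary known lemmas and is used without proof. There is therefore nothing in the paper to compare your argument against, but the route you take---Hahn--Banach to ensure $E\neq\emptyset$, Banach--Alaoglu plus Krein--Milman to extract an extreme point of $E$, and then the two-step extremality transfer (first using $f\in Ext(B_{\mathcal{Z}^*})$ to force $g_1,g_2\in E$, then using $g_0\in Ext(E)$ to conclude $g_1=g_2$)---is exactly the textbook proof.
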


\begin{lemma}\cite[Lemma 3.1]{W}\label{lemma-wojcik}
	Suppose that $\mathbb{X}$ is a reflexive Banach space and $\mathbb{Y}$ is a Banach space. Suppose that $\mathbb{K}(\mathbb{X},\mathbb{Y})$ is an $M-$ideal in $\mathbb{L}(\mathbb{X},\mathbb{Y}).$ Let $T\in \mathbb{L}(\mathbb{X},\mathbb{Y}), \|T\|=1$ and  dist$(T,\mathbb{K}(\mathbb{X},\mathbb{Y}))<1.$  Then $M_T\cap Ext(B_\mathbb{X})\neq \emptyset$ and 
	\[Ext ~J(T)=\{y^*\otimes x\in \mathbb{K}(\mathbb{X},\mathbb{Y})^*:x\in M_T\cap Ext(B_{\mathbb{X}}), y^*\in Ext ~J(Tx)\},\]
	where  $y^*\otimes x: \mathbb{K}(\mathbb{X},\mathbb{Y})\to K$ is defined by $y^*\otimes x(S)=y^*(Sx)$ for every $S\in \mathbb{K}(\mathbb{X},\mathbb{Y}).$
\end{lemma}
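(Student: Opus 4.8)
The plan is to move the whole problem into the dual $\mathbb{L}(\mathbb{X},\mathbb{Y})^{*}$ and to make the two hypotheses cooperate: the $M$-ideal hypothesis to split the dual, and the strict distance hypothesis to force every supporting functional of $T$ into the ``compact part'' of that splitting. Write $\mathbb{K}=\mathbb{K}(\mathbb{X},\mathbb{Y})$ and $\mathbb{L}=\mathbb{L}(\mathbb{X},\mathbb{Y})$. Since $\mathbb{K}$ is an $M$-ideal in $\mathbb{L}$, by definition $\mathbb{K}^{0}$ is an $L$-summand of $\mathbb{L}^{*},$ and standard $M$-ideal theory \cite{HWW} yields the isometric $\ell_{1}$-decomposition $\mathbb{L}^{*}=\mathbb{K}^{*}\oplus_{1}\mathbb{K}^{0},$ in which $\mathbb{K}^{*}$ is identified, via restriction, with the unique norm-preserving Hahn--Banach extensions of functionals on $\mathbb{K}.$ Throughout I would regard $J(T)=\{g\in B_{\mathbb{L}^{*}}:g(T)=\|T\|=1\}$ as a weak*-compact, convex, nonempty \emph{face} of $B_{\mathbb{L}^{*}};$ nonemptiness and weak*-compactness come from Hahn--Banach and Banach--Alaoglu, together with the fact that $g\mapsto g(T)$ is weak*-continuous on $\mathbb{L}^{*}$ because $T\in\mathbb{L}.$

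The first genuine step, and the crux of the whole argument, will be to show that $J(T)\subseteq \mathbb{K}^{*}.$ This is exactly where dist$(T,\mathbb{K})<1$ enters. By duality, $d:=\mathrm{dist}(T,\mathbb{K})=\sup\{|g(T)|:g\in\mathbb{K}^{0},\ \|g\|\le 1\},$ so $|g(T)|\le d\,\|g\|$ for every $g\in\mathbb{K}^{0}.$ Given $f\in J(T),$ I would decompose $f=f_{1}+f_{2}$ with $f_{1}\in\mathbb{K}^{*},$ $f_{2}\in\mathbb{K}^{0}$ and $\|f_{1}\|+\|f_{2}\|=\|f\|=1.$ Taking real parts in $1=f(T)=f_{1}(T)+f_{2}(T)$ gives
\[
1\le |f_{1}(T)|+|f_{2}(T)|\le \|f_{1}\|+d\,\|f_{2}\|=1-(1-d)\|f_{2}\|,
\]
and since $d<1$ this forces $\|f_{2}\|=0,$ i.e. $f=f_{1}\in\mathbb{K}^{*}.$ Because the embedding $\mathbb{K}^{*}\hookrightarrow\mathbb{L}^{*}$ is isometric, $B_{\mathbb{K}^{*}}=\mathbb{K}^{*}\cap B_{\mathbb{L}^{*}},$ whence $J(T)=B_{\mathbb{K}^{*}}\cap\{g:g(T)=1\}$ is in fact a face of $B_{\mathbb{K}^{*}}$ as well.

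With $J(T)$ realized as a face of $B_{\mathbb{K}^{*}},$ the finish is structural. Using the elementary identity $Ext(F)=F\cap Ext(C)$ for a face $F$ of a convex set $C,$ I would obtain $Ext\,J(T)=J(T)\cap Ext(B_{\mathbb{K}^{*}}).$ Next I would invoke the Ruess--Stegall description of the extreme points of the dual ball of the space of compact operators with reflexive domain: since $\mathbb{X}$ is reflexive, $Ext(B_{\mathbb{K}^{*}})=\{y^{*}\otimes x: x\in Ext(B_{\mathbb{X}}),\ y^{*}\in Ext(B_{\mathbb{Y}^{*}})\}.$ Intersecting with the face condition $y^{*}\otimes x(T)=y^{*}(Tx)=1$ forces $1=|y^{*}(Tx)|\le\|Tx\|\le 1,$ hence $\|Tx\|=1$ (so $x\in M_{T}$) and $y^{*}(Tx)=\|Tx\|$ (so $y^{*}\in J(Tx)$). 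As $J(Tx)$ is itself a face of $B_{\mathbb{Y}^{*}},$ the condition $y^{*}\in Ext(B_{\mathbb{Y}^{*}})\cap J(Tx)$ is the same as $y^{*}\in Ext\,J(Tx),$ which is exactly the asserted equality. Finally $J(T)\ne\emptyset$ is weak*-compact and convex, so by Krein--Milman $Ext\,J(T)\ne\emptyset;$ any member $y^{*}\otimes x$ of it carries, thanks to reflexivity, a genuine vector $x\in M_{T}\cap Ext(B_{\mathbb{X}}),$ proving $M_{T}\cap Ext(B_{\mathbb{X}})\ne\emptyset.$

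The main obstacle is twofold. The conceptual heart is the confinement $J(T)\subseteq\mathbb{K}^{*};$ it is precisely here that the two hypotheses must interact, and the \emph{strictness} dist$(T,\mathbb{K})<\|T\|$ is indispensable, since equality would permit supporting functionals with a nonzero $\mathbb{K}^{0}$-component. The heaviest external input is the Ruess--Stegall extreme-point description of $B_{\mathbb{K}^{*}}$ for a general range space $\mathbb{Y};$ reflexivity of $\mathbb{X}$ is what allows this description to be phrased with actual vectors $x\in\mathbb{X}$ rather than elements of $\mathbb{X}^{**},$ and is therefore what ultimately delivers the norm attainment. The remaining ingredients---the $\ell_{1}$-splitting of the dual, the face identity $Ext(F)=F\cap Ext(C),$ and Krein--Milman---are routine.
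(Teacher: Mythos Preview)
The paper does not supply its own proof of this lemma; it is quoted verbatim from W\'{o}jcik \cite[Lemma~3.1]{W} and used as a black box. Your argument is correct and is essentially the proof in \cite{W}: the $\ell_1$-splitting $\mathbb{L}^{*}=\mathbb{K}^{*}\oplus_{1}\mathbb{K}^{0}$ from $M$-ideal theory, the use of the strict inequality $\mathrm{dist}(T,\mathbb{K})<1$ to kill the $\mathbb{K}^{0}$-component of any supporting functional, the Ruess--Stegall identification of $Ext(B_{\mathbb{K}^{*}})$ with elementary tensors $y^{*}\otimes x$ (reflexivity of $\mathbb{X}$ being exactly what replaces $x^{**}\in Ext(B_{\mathbb{X}^{**}})$ by $x\in Ext(B_{\mathbb{X}})$), and the face identity $Ext\,J(T)=J(T)\cap Ext(B_{\mathbb{K}^{*}})$. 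The only point worth tightening is the remark that $y^{*}\otimes x$, initially defined on $\mathbb{K}$, acts on $T\in\mathbb{L}$ via its unique norm-preserving extension, which is again $S\mapsto y^{*}(Sx)$; you implicitly use this when writing $(y^{*}\otimes x)(T)=y^{*}(Tx)$, and it is immediate since that formula already defines a norm-one functional on all of $\mathbb{L}$.
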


Now, we obtain a characterization of Birkhoff-James orthogonality of an operator $T\in \mathbb{L}(\mathbb{X},\mathbb{Y})$  to a finite dimensional subspace of $ \mathbb{L}(\mathbb{X},\mathbb{Y}),$ whenever $\mathbb{X}$ is a reflexive Banach space and $\mathbb{Y}$ is a finite dimensional Banach space. Observe that in this case $\mathbb{L}(\mathbb{X},\mathbb{Y})=\mathbb{K}(\mathbb{X},\mathbb{Y}).$
\begin{theorem}\label{th-banachfi}
Suppose $\mathbb{X}$ is a reflexive Banach space and $\mathbb{Y}$ is a finite dimensional Banach space. Suppose $T\in S_{\mathbb{L}(\mathbb{X},\mathbb{Y})}$ and $\mathcal{W}$ is a finite dimensional subspace of $\mathbb{L}(\mathbb{X},\mathbb{Y}).$ Then  $T\perp_B \mathcal{W}$ if and only if there exist $x_i\in M_T\cap Ext(B_{\mathbb{X}}),~y_i^*\in Ext(J(Tx_i))$ and $\lambda_i>0$ for $1\leq i\leq h,(h\in \mathbb{N})$ such that $\sum_{i=1}^h \lambda_i=1$ and $\sum_{i=1}^h \lambda_i y_i^*(Ax_i)=0$ for all $A\in \mathcal{W}.$
\end{theorem}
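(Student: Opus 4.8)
The plan is to reduce $T\perp_B\mathcal{W}$ to the existence of a single norm-one functional in $J(T)$ that annihilates $\mathcal{W}$, and then to decompose that functional into the rank-one extreme functionals $y^*\otimes x$ described in Lemma~\ref{lemma-wojcik}. The starting point is the distance/duality reformulation: since $\mathcal{W}$ is a subspace, $T\perp_B\mathcal{W}$ is equivalent to $\mathrm{dist}(T,\mathcal{W})=\|T\|=1$, and by the standard duality for the distance to a closed subspace this holds if and only if there is $f\in B_{\mathbb{L}(\mathbb{X},\mathbb{Y})^*}$ with $f|_{\mathcal{W}}=0$ and $f(T)=1$; any such $f$ necessarily lies in $J(T)$. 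Here the supremum defining $\mathrm{dist}$ is attained because $\mathcal{W}$, being finite dimensional, is closed and its annihilator meets the weak$^*$ compact ball $B_{\mathbb{L}(\mathbb{X},\mathbb{Y})^*}$ in a weak$^*$ compact set.

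Since $\mathbb{X}$ is only reflexive, $\mathbb{L}(\mathbb{X},\mathbb{Y})$ may be infinite dimensional, so Krein--Milman alone will not express $f$ as a \emph{finite} convex combination of extreme points. To force finiteness I would pass to $\mathcal{Z}=\mathrm{span}(\{T\}\cup\mathcal{W})$, which is finite dimensional, and restrict: $g=f|_{\mathcal{Z}}$ satisfies $\|g\|\le 1$, $g(T)=1$ (so $\|g\|=1$) and $g|_{\mathcal{W}}=0$. Lemma~\ref{lemma-singer} then yields $g=\sum_{i=1}^{h}\lambda_i g_i$ with $g_i\in Ext(B_{\mathcal{Z}^*})$, $\lambda_i>0$ and $\sum_{i=1}^{h}\lambda_i=1$. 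Because $\mathrm{Re}\,g_i(T)\le|g_i(T)|\le 1$ for each $i$ while $\sum_{i=1}^{h}\lambda_i\,\mathrm{Re}\,g_i(T)=\mathrm{Re}\,g(T)=1=\sum_{i=1}^{h}\lambda_i$, every term must be maximal, giving $g_i(T)=1$; thus each $g_i$ is an extreme point of $B_{\mathcal{Z}^*}$ that also supports $T$.

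Next I would lift extremality back to the whole space. By Lemma~\ref{lemma-singer2} each $g_i$ extends to $G_i\in Ext(B_{\mathbb{L}(\mathbb{X},\mathbb{Y})^*})$ with $G_i|_{\mathcal{Z}}=g_i$; then $G_i(T)=1$ puts $G_i\in J(T)$, and an extreme point of $B_{\mathbb{L}(\mathbb{X},\mathbb{Y})^*}$ that lies in the convex set $J(T)$ is automatically extreme in $J(T)$. Since $\mathbb{Y}$ is finite dimensional we have $\mathbb{L}(\mathbb{X},\mathbb{Y})=\mathbb{K}(\mathbb{X},\mathbb{Y})$, so $\mathbb{K}(\mathbb{X},\mathbb{Y})$ is trivially an $M$-ideal in $\mathbb{L}(\mathbb{X},\mathbb{Y})$ with $\mathrm{dist}(T,\mathbb{K}(\mathbb{X},\mathbb{Y}))=0<1$, and Lemma~\ref{lemma-wojcik} applies to give $G_i=y_i^*\otimes x_i$ with $x_i\in M_T\cap Ext(B_{\mathbb{X}})$ and $y_i^*\in Ext(J(Tx_i))$. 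Evaluating at any $A\in\mathcal{W}$ gives $0=g(A)=\sum_{i=1}^{h}\lambda_i g_i(A)=\sum_{i=1}^{h}\lambda_i\,y_i^*(Ax_i)$, which is the asserted identity. The converse is a direct verification: putting $f=\sum_{i=1}^{h}\lambda_i\,(y_i^*\otimes x_i)$ one has $\|f\|\le 1$ and $f(T)=\sum_{i=1}^{h}\lambda_i\|Tx_i\|=1$ (as $x_i\in M_T$ and $y_i^*\in J(Tx_i)$), so $f\in J(T)$ and $f|_{\mathcal{W}}=0$, whence $\|T+\lambda A\|\ge|f(T+\lambda A)|=\|T\|$ for all $A\in\mathcal{W}$ and all scalars $\lambda$.

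The main obstacle is precisely the forward passage from one abstract annihilating supporting functional to a finite convex combination of the rank-one functionals $y^*\otimes x$: the finite dimensionality of $\mathcal{W}$ (through $\mathcal{Z}$ and Lemma~\ref{lemma-singer}) is what guarantees finiteness, Lemma~\ref{lemma-singer2} is what preserves extremality when extending from $\mathcal{Z}$ back to $\mathbb{L}(\mathbb{X},\mathbb{Y})$, and the reflexivity of $\mathbb{X}$ together with $\dim\mathbb{Y}<\infty$ is what makes the explicit description of $Ext(J(T))$ in Lemma~\ref{lemma-wojcik} available.
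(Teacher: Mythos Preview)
Your proof is correct and follows essentially the same route as the paper: restrict to the finite dimensional subspace $\mathcal{Z}=\mathrm{span}(\{T\}\cup\mathcal{W})$, apply Lemma~\ref{lemma-singer} to decompose the supporting functional, lift each extreme piece back via Lemma~\ref{lemma-singer2}, and identify the resulting extreme points of $J(T)$ through Lemma~\ref{lemma-wojcik}. The only cosmetic difference is that you first produce the annihilating functional on all of $\mathbb{L}(\mathbb{X},\mathbb{Y})$ and then restrict to $\mathcal{Z}$, whereas the paper defines it on $\mathcal{Z}$ directly; this extra step is harmless but unnecessary.
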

\begin{proof}
	First we prove the necessary part of the theorem. Suppose $T\perp_B \mathcal{W}.$ Let $\mathcal{Z}=\text{Span}\{T,\mathcal{W}\}.$ Define a linear functional $f:\mathcal{Z}\to K$ by $f(\alpha T+A)=\alpha,$ where $\alpha \in K$ and $A\in \mathcal{W}.$ It is easy to observe that $f\in S_{\mathcal{Z}^*}.$ Now, by Lemma \ref{lemma-singer}, we get $\phi_1,\phi_2,\ldots,\phi_h\in Ext(B_{\mathcal{Z}^*})$ and $\lambda_i>0$ for $1\leq i\leq h$ such that $\sum_{i=1}^h \lambda_i=1$ and $f=\sum_{i=1}^h \lambda_i\phi_i.$ Again using Lemma \ref{lemma-singer2}, we get $\psi_i\in Ext(B_{\mathbb{L}(\mathbb{X},\mathbb{Y})^*})$ for $1\leq i\leq h$ such that $\psi_i|_{\mathcal{Z}}=\phi_i.$ Now $f(T)=1\Rightarrow \sum_{i=1}^h\lambda_i\phi_i(T)=1\Rightarrow \sum_{i=1}^h\lambda_i\psi_i(T)=1\Rightarrow \psi_i(T)=1~\text{for all~} 1\leq i\leq h.$ Therefore, $\psi_i\in J(T)$ and so $\psi_i\in Ext(J(T))$ for all $1\leq i\leq h.$ By Lemma \ref{lemma-wojcik}, there exist $x_i\in M_T\cap Ext(B_{\mathbb{X}})$ and $y_i^*\in Ext(J(Tx_i))$ such that $\psi_i=y_i^*\otimes x_i.$ Now, for each $A\in \mathcal{W}, ~f(A)=0\Rightarrow \sum_{i=1}^h\lambda_i\phi_i(A)=0\Rightarrow \sum_{i=1}^h\lambda_i\psi_i(A)=0\Rightarrow \sum_{i=1}^h\lambda_i y_i^*(Ax_i)=0.$ This completes the proof of the necessary part of the theorem. 

The sufficient part follows easily, since $\sum_{i=1}^h\lambda_i y_i^*(Ax_i)=0\Rightarrow \sum_{i=1}^h\lambda_i y_i^*\otimes x_i(A)=0$ for each $A\in \mathcal{W}$ and $y_i^*\otimes x_i\in Ext(J(T))\Rightarrow \sum_{i=1}^h\lambda_i y_i^*\otimes x_i\in J(T).$ Let $f=\sum_{i=1}^h\lambda_i y_i^*\otimes x_i.$ Then $\|T+A\|\geq |f(T+A)|=|f(T)|=1$ for all $A\in \mathcal{W}.$ Thus, $T\perp_B \mathcal{W}.$ This completes the proof of the theorem. 
\end{proof}
In addition, if we consider that $\mathbb{Y}$ is a smooth Banach space, then we get the following corollary.  
\begin{cor}\label{cor-finite}
Suppose $\mathbb{X}$ is a reflexive Banach space and $\mathbb{Y}$ is a finite dimensional smooth Banach space. Let $T\in S_{\mathbb{L}(\mathbb{X},\mathbb{Y})}$ and  $\mathcal{W}$ be a finite dimensional subspace of $\mathbb{L}(\mathbb{X},\mathbb{Y}).$ Then $T\perp_B \mathcal{W}$ if and only if there exist $x_i\in M_T\cap Ext(B_{\mathbb{X}})$ and $\lambda_i>0$ for $1\leq i\leq h$ such that $\sum_{i=1}^h\lambda_i=1$ and $\sum_{i=1}^h\lambda_i [Ax_i,Tx_i]=0$ for all $A\in \mathcal{W}.$ 	
\end{cor}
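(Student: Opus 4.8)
The plan is to deduce the corollary directly from Theorem \ref{th-banachfi} by using the smoothness of $\mathbb{Y}$ to replace each extreme supporting functional $y_i^*$ with an explicit expression through the semi-inner-product. First I would apply Theorem \ref{th-banachfi} verbatim: since $\mathbb{X}$ is reflexive, $\mathbb{Y}$ is finite dimensional, and $\mathcal{W}$ is finite dimensional, it produces $x_i\in M_T\cap Ext(B_{\mathbb{X}})$, $y_i^*\in Ext(J(Tx_i))$, and $\lambda_i>0$ with $\sum_{i=1}^h\lambda_i=1$ and $\sum_{i=1}^h\lambda_i y_i^*(Ax_i)=0$ for all $A\in\mathcal{W}$. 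The task reduces to showing $y_i^*(Ax_i)=[Ax_i,Tx_i]$, which converts the orthogonality condition into the stated form.

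The key observation is that each $Tx_i$ is a \emph{unit} vector: as $x_i\in M_T$ and $\|T\|=1$, we have $\|Tx_i\|=1\neq 0$. Because $\mathbb{Y}$ is smooth, $J(Tx_i)$ is a singleton, so $Ext(J(Tx_i))=J(Tx_i)=\{y_i^*\}$ and there is no genuine choice to make. Next I would recall that a smooth $\mathbb{Y}$ carries a unique s.i.p. $[\,,\,]$ compatible with the norm. For the unit vector $Tx_i$, the functional $z\mapsto [z,Tx_i]$ lies in $S_{\mathbb{Y}^*}$, by property $(c)$ of the s.i.p. definition, and maps $Tx_i$ to $[Tx_i,Tx_i]=\|Tx_i\|^2=1$, so it is a supporting functional of $Tx_i$. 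By uniqueness of the supporting functional in a smooth space, it must coincide with $y_i^*$, that is, $y_i^*(z)=[z,Tx_i]$ for all $z\in\mathbb{Y}$.

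Substituting $z=Ax_i$ gives $y_i^*(Ax_i)=[Ax_i,Tx_i]$, so the condition $\sum_{i=1}^h\lambda_i y_i^*(Ax_i)=0$ from Theorem \ref{th-banachfi} is precisely $\sum_{i=1}^h\lambda_i[Ax_i,Tx_i]=0$; this settles necessity. For sufficiency I would reverse the substitution: given $x_i$ and $\lambda_i$ with $\sum_{i=1}^h\lambda_i[Ax_i,Tx_i]=0$, take $y_i^*$ to be the unique supporting functional of the unit vector $Tx_i$, so that $y_i^*(Ax_i)=[Ax_i,Tx_i]$ recovers the hypothesis of Theorem \ref{th-banachfi}, which yields $T\perp_B\mathcal{W}$.

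I do not anticipate a serious obstacle, since the whole argument merely translates the conclusion of Theorem \ref{th-banachfi} into s.i.p. language. The only point needing care is the identification of the unique supporting functional with $z\mapsto[z,Tx_i]$ — verifying that this functional has norm exactly one and attains it at $Tx_i$ — together with the observation that smoothness forces $Ext(J(Tx_i))=J(Tx_i)$ to be a single functional, so the $y_i^*$ supplied by Theorem \ref{th-banachfi} is unambiguous.
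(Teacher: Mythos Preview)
Your proposal is correct and follows essentially the same route as the paper: apply Theorem~\ref{th-banachfi}, then use smoothness of $\mathbb{Y}$ to identify the unique supporting functional $y_i^*$ of the unit vector $Tx_i$ with $z\mapsto [z,Tx_i]$. Your version is more explicit about why $\|Tx_i\|=1$, why $z\mapsto[z,Tx_i]$ lies in $J(Tx_i)$, and about the sufficiency direction, but the underlying argument is identical.
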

\begin{proof}
	By Theorem \ref{th-banachfi}, $T\perp_B \mathcal{W}$ if and only if there exist $x_i\in M_T\cap Ext(B_{\mathbb{X}}),y_i^*\in Ext(J(Tx_i))$ and $\lambda_i>0$ for $1\leq i\leq h$ such that $\sum_{i=1}^h \lambda_i=1$ and $\sum_{i=1}^h \lambda_i y_i^*(Ax_i)=0$ for all $A\in \mathcal{W}.$ Since $\mathbb{Y}$ is smooth, and $y_i^*\in Ext(J(Tx_i)),$ we have $y_i^*(z)=[z,Tx_i]$ for all $z\in \mathbb{Y}.$ Therefore, $\sum_{i=1}^h \lambda_i y_i^*(Ax_i)=0\Rightarrow \sum_{i=1}^h \lambda_i[Ax_i,Tx_i]=0$ for all $A\in \mathcal{W}.$ This proves the result.
\end{proof}
Next, if we consider $\mathbb{X}=\mathbb{Y}=K^n,$ equipped with the usual inner product on $K^n$, then  \cite[Th. 1]{G}  follows from Theorem \ref{th-banachfi}. We prove this in the following corollary.

\begin{cor}
	Let $\mathcal{W}$ be a subspace of $M_n(K)$ and $T\in M_n(K)$ such that $\|T\|=1.$ Then $T\perp_B \mathcal{W}$ if and only if there exists a density matrix $P\in M_n(K)$ such that $T^*TP=P$ and $tr((TP)^*A)=0$ for all $A\in \mathcal{W} $   (note that a positive semidefinite matrix with trace $1$ is called a density matrix.). 
\end{cor}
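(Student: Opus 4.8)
The plan is to deduce this directly from Corollary~\ref{cor-finite}, exploiting the fact that $\mathbb{X}=\mathbb{Y}=K^n$ with the usual inner product is a finite dimensional, reflexive, smooth space whose compatible s.i.p.\ is the inner product $\langle\cdot,\cdot\rangle$, and whose unit ball is strictly convex so that $Ext(B_{K^n})=S_{K^n}$. With these identifications Corollary~\ref{cor-finite} reads: $T\perp_B\mathcal{W}$ if and only if there exist unit vectors $x_i\in M_T$ and scalars $\lambda_i>0$, $1\le i\le h$, with $\sum_{i=1}^h\lambda_i=1$ and $\sum_{i=1}^h\lambda_i\langle Ax_i,Tx_i\rangle=0$ for every $A\in\mathcal{W}$. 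The entire task is therefore to show that this data-form condition is equivalent to the density-matrix condition in the statement.

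The bridge between the two formulations is the correspondence $\{(x_i,\lambda_i)\}\leftrightarrow P:=\sum_{i=1}^h\lambda_i x_ix_i^*$. First I would record the elementary observation that, for a unit vector $x$, membership $x\in M_T$ (that is, $\|Tx\|=\|T\|=1$) is equivalent to $T^*Tx=x$: indeed $\|Tx\|^2=\langle T^*Tx,x\rangle$ attains the top eigenvalue $\|T\|^2=1$ of the positive operator $T^*T$ precisely on its associated eigenvectors. Next I would compute, for any matrix $A$, using $P^*=P$ and the cyclicity of the trace,
\[
\mathrm{tr}\big((TP)^*A\big)=\mathrm{tr}(PT^*A)=\sum_{i=1}^h\lambda_i\,\mathrm{tr}(x_ix_i^*T^*A)=\sum_{i=1}^h\lambda_i\,x_i^*T^*Ax_i=\sum_{i=1}^h\lambda_i\langle Ax_i,Tx_i\rangle,
\]
so the two orthogonality conditions coincide term by term.

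For necessity, I would take the data $(x_i,\lambda_i)$ furnished by Corollary~\ref{cor-finite} and set $P=\sum_{i=1}^h\lambda_ix_ix_i^*$. Then $P$ is positive semidefinite and $\mathrm{tr}(P)=\sum_{i=1}^h\lambda_i\|x_i\|^2=1$, so $P$ is a density matrix; since each $x_i\in M_T$ yields $T^*Tx_i=x_i$, summing gives $T^*TP=P$; and the displayed identity turns $\sum_i\lambda_i\langle Ax_i,Tx_i\rangle=0$ into $\mathrm{tr}((TP)^*A)=0$. For sufficiency, given such a $P$, I would diagonalize it by the spectral theorem as $P=\sum_{i=1}^h\lambda_ix_ix_i^*$ with orthonormal $x_i$ and $\lambda_i>0$; the normalization $\mathrm{tr}(P)=1$ forces $\sum_i\lambda_i=1$, the relation $T^*TP=P$ applied to the eigenvector $x_i$ (using $\lambda_i>0$) gives $T^*Tx_i=x_i$, hence $x_i\in M_T=M_T\cap Ext(B_{K^n})$, and the displayed identity converts $\mathrm{tr}((TP)^*A)=0$ back to $\sum_i\lambda_i\langle Ax_i,Tx_i\rangle=0$. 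Corollary~\ref{cor-finite} then delivers $T\perp_B\mathcal{W}$.

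I do not expect a genuine obstacle, as the corollary is essentially a translation of Corollary~\ref{cor-finite} into matrix language. The only point requiring a little care is that in the necessity direction the vectors $x_i$ produced by Corollary~\ref{cor-finite} need not be orthonormal, so $P$ is a convex combination of possibly non-orthogonal rank-one projections; this is harmless, since positivity, the trace normalization, and the eigenrelation $T^*Tx_i=x_i$ all hold summand by summand, and the spectral decomposition is only invoked in the reverse direction.
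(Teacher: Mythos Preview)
Your proposal is correct and follows essentially the same route as the paper: both directions are deduced from Corollary~\ref{cor-finite} via the correspondence $P=\sum_i\lambda_i x_ix_i^*$, using $T^*Tx_i=x_i$ for $x_i\in M_T$ in the forward direction and the spectral decomposition of $P$ in the reverse direction. Your write-up is slightly more explicit (the trace identity, the eigenvalue argument for $M_T$, and the remark that the $x_i$ need not be orthonormal in the forward direction), but the argument is the same.
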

\begin{proof}
	First we prove the necessary part. Let $T\perp_B \mathcal{W}.$ Then by Corollary \ref{cor-finite}, there exist $x_i\in M_T\cap Ext(B_{\mathbb{X}})$ and $\lambda_i>0$ for $1\leq i\leq h$ such that $\sum_{i=1}^h\lambda_i=1$ and $\sum_{i=1}^h\lambda_i [Ax_i,Tx_i]=0$ for all $A\in \mathcal{W}.$ Since in an inner product space, the inner product is the only s.i.p., we have, $\sum_{i=1}^h\lambda_i \langle Ax_i,Tx_i\rangle=0\Rightarrow \sum_{i=1}^h\lambda_i\langle Tx_ix_i^*,A\rangle=0\Rightarrow \langle TP,A\rangle=0,$ where $P=\sum_{i=1}^h \lambda_i x_ix_i^*.$ Clearly, $P$ is a density matrix. Now, $\langle TP,A\rangle=0\Rightarrow tr((TP)^*A)=0.$ For each $x_i\in M_T, T^*Tx_i=x_i.$ Therefore, $T^*TP=T^*T(\sum_{i=1}^h \lambda_i x_ix_i^*)=\sum_{i=1}^h\lambda_iT^*Tx_ix_i^*=\sum_{i=1}^h\lambda_i x_ix_i^*=P.$
	
	For the sufficient part, suppose there exists a density matrix $P\in M_n(K)$ such that $T^*TP=P$ and $tr((TP)^*A)=0$ for all $A\in \mathcal{W}.$ Since $P$ is a density matrix, there exist $\lambda_i>0$ and $x_i\in K^n,\|x_i\|=1$ for some $1\leq i\leq h$ such that $\sum_{i=1}^h\lambda_i=1$ and $P=\sum_{i=1}^h \lambda_i x_ix_i^*.$  Here $\lambda_i$ is an eigenvalue of $P$ corresponding to the eigenvector $x_i.$ Now, $T^*TP=P\Rightarrow x_i\in M_T$ and $tr((TP)^*A)=0\Rightarrow \sum_{i=1}^h\lambda_i\langle Ax_i,Tx_i\rangle=0.$ Therefore by Corollary \ref{cor-finite}, we have $T\perp_B \mathcal{W}.$ This completes the proof.	
\end{proof}

Suppose $Z$ is a closed subspace of $\mathbb{X}.$ Consider the space $L_Z(\mathbb{X},\mathbb{Y}):=\{T\in \mathbb{L}(\mathbb{X},\mathbb{Y}):T|_Z=0\}.$ In \cite[Th. 2.1(a)]{W2}  W\'{o}jcik obtained a characterization for $T\perp_B L_Z(\mathbb{X},\mathbb{Y})$ for some special operator $T$ assuming that the spaces are real and $Ext(B_{\mathbb{Y}^*})$ is closed. In the following theorem, we show that \cite[Th. 2.1(a)]{W2} also holds for complex Banach spaces and the assumption that $Ext(B_{\mathbb{Y}^*})$ is closed is redundant. 

\begin{theorem}
	Suppose $\mathbb{X}$ is a reflexive Banach space and $\mathbb{Y}$ is an $n-$dimensional Banach space. Let $f\in S_{\mathbb{X}^*}$ and $Z=\ker(f).$ Let $A\in \mathbb{L}(\mathbb{X},\mathbb{Y}),~ T_0\in L_Z(\mathbb{X},\mathbb{Y}).$ Suppose $A_0=A-T_0$ does not attain norm on $Z,$ i.e., $M_{A_0}\cap Z=\emptyset.$ Let 
	\[F_{A_0}=\{y^*\in Ext(B_{\mathbb{Y}^*}):\exists ~x\in M_{A_0}\cap Ext(B_{\mathbb{X}}) \mbox{ with } f(x)>0, y^*(A_0 x)=\|A_0\|\}.\]
	Then $A_0\perp_B L_Z(\mathbb{X},\mathbb{Y})$ if and only if $0\in \mbox{conv}(F_{A_0}).$ 
\end{theorem}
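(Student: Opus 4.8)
The plan is to deduce the statement from Theorem~\ref{th-banachfi}. First I would observe that $\mathcal{W}:=L_Z(\mathbb{X},\mathbb{Y})$ is finite dimensional: since $Z=\ker f$ is a hyperplane, fixing $x_0$ with $f(x_0)=1$, every $S\in\mathcal{W}$ satisfies $Sx=f(x)\,Sx_0$, so $\mathcal{W}=\{x\mapsto f(x)v:v\in\mathbb{Y}\}$ is isomorphic to $\mathbb{Y}$ and hence $n$-dimensional. Because $\dim\mathbb{Y}<\infty$, every operator into $\mathbb{Y}$ has finite rank, so $\mathbb{L}(\mathbb{X},\mathbb{Y})=\mathbb{K}(\mathbb{X},\mathbb{Y})$ and the hypotheses of Theorem~\ref{th-banachfi} are met (after normalizing, replacing $A_0$ by $A_0/\|A_0\|$; note that $M_{A_0}\cap Z=\emptyset$ forces $A_0\neq 0$, and Birkhoff--James orthogonality is scale invariant).

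Next I would rewrite the orthogonality condition supplied by Theorem~\ref{th-banachfi}. Applied to $\mathcal{W}$, it says $A_0\perp_B\mathcal{W}$ if and only if there are $x_i\in M_{A_0}\cap Ext(B_{\mathbb{X}})$, $y_i^*\in Ext(J(A_0x_i))$ and $\lambda_i>0$ with $\sum\lambda_i=1$ and $\sum_i\lambda_i\,y_i^*(A'x_i)=0$ for all $A'\in\mathcal{W}$. Writing $A'x=f(x)v$, this reads $\big(\sum_i\lambda_i f(x_i)y_i^*\big)(v)=0$ for every $v\in\mathbb{Y}$, i.e.
\[\sum_{i=1}^h\lambda_i f(x_i)\,y_i^*=0 \quad\text{in } \mathbb{Y}^*.\]
Moreover, since $x_i\in M_{A_0}$ we have $y_i^*(A_0x_i)=\|A_0\|$, and as $J(A_0x_i)$ is a face of $B_{\mathbb{Y}^*}$, the inclusion $y_i^*\in Ext(J(A_0x_i))$ forces $y_i^*\in Ext(B_{\mathbb{Y}^*})$. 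Thus each $y_i^*$ is already a candidate for membership in $F_{A_0}$, the only missing requirement being the sign condition $f(x_i)>0$.

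The crucial observation is that $M_{A_0}\cap Z=\emptyset$ guarantees $f(x_i)\neq 0$ for every $x_i\in M_{A_0}$, so I can normalize the phase. For each $i$ put $\theta_i=\arg f(x_i)$ and replace $x_i$ by $\tilde x_i=e^{-i\theta_i}x_i$ and $y_i^*$ by $\tilde y_i^*=e^{i\theta_i}y_i^*$; then $f(\tilde x_i)=|f(x_i)|>0$, a direct computation gives $\tilde y_i^*(A_0\tilde x_i)=y_i^*(A_0x_i)=\|A_0\|$, and multiplication by a unimodular scalar preserves both $M_{A_0}\cap Ext(B_{\mathbb{X}})$ and $Ext(B_{\mathbb{Y}^*})$, so $\tilde y_i^*\in F_{A_0}$ (in the real case $e^{-i\theta_i}=\pm1$ and this is just a change of sign). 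Since $\lambda_i f(x_i)y_i^*=\big(\lambda_i|f(x_i)|\big)\tilde y_i^*$ with positive coefficient $\mu_i:=\lambda_i|f(x_i)|$, the displayed identity becomes $\sum_i\mu_i\tilde y_i^*=0$, and dividing by $\sum_i\mu_i$ exhibits $0\in\mbox{conv}(F_{A_0})$.

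For the converse I would reverse the construction: given $0=\sum_i\nu_i y_i^*$ with $\nu_i>0$, $\sum\nu_i=1$ and $y_i^*\in F_{A_0}$ witnessed by $x_i\in M_{A_0}\cap Ext(B_{\mathbb{X}})$ with $f(x_i)>0$, I set $\lambda_i=(\nu_i/f(x_i))\big/\sum_j(\nu_j/f(x_j))$, which are positive and sum to $1$ precisely because every $f(x_i)>0$; then $\lambda_i f(x_i)=c\,\nu_i$ for a single constant $c>0$, whence $\sum_i\lambda_i f(x_i)y_i^*=c\sum_i\nu_i y_i^*=0$, and since an extreme point of $B_{\mathbb{Y}^*}$ lying in $J(A_0x_i)$ is extreme in that face, we recover $y_i^*\in Ext(J(A_0x_i))$, so Theorem~\ref{th-banachfi} gives $A_0\perp_B\mathcal{W}$. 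I expect the main obstacle to be the bookkeeping in the phase-normalization step, specifically verifying that passage to $\tilde x_i,\tilde y_i^*$ preserves extreme points and the norm-attainment and supporting relations simultaneously; this is exactly the point at which the argument goes through uniformly over $\mathbb{R}$ and $\mathbb{C}$, and thereby removes both the real-scalar restriction and the closedness hypothesis on $Ext(B_{\mathbb{Y}^*})$ imposed in \cite[Th. 2.1(a)]{W2}.
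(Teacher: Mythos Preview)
Your proposal is correct and follows essentially the same route as the paper: both reduce to Theorem~\ref{th-banachfi}, identify $\mathcal{W}=\{x\mapsto f(x)v:v\in\mathbb{Y}\}$, rewrite the orthogonality condition as $\sum_i\lambda_i f(x_i)y_i^*=0$, normalize so that $f(x_i)>0$, and then rescale the coefficients; the converse is handled by the inverse rescaling in both. Your explicit phase-normalization $\tilde x_i=e^{-i\theta_i}x_i$, $\tilde y_i^*=e^{i\theta_i}y_i^*$ spells out what the paper compresses into a ``without loss of generality $a_i>0$,'' and your use of the parametrization $A'x=f(x)v$ replaces the paper's basis-and-test-operators computation, but these are cosmetic differences.
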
 
\begin{proof}
	For convenience, we denote $\mathcal{W}=L_Z(\mathbb{X},\mathbb{Y}).$ Since $\mathbb{Y}$ is finite dimensional, it is easy to observe that $\mathcal{W}$ is finite dimensional. Suppose that $A_0\perp_B \mathcal{W}.$ Without loss of generality we may assume that $\|A_0\|=1.$ Then by Theorem \ref{th-banachfi}, there exist  $x_i\in M_{A_0}\cap Ext(B_{\mathbb{X}}),y_i^*\in Ext(J(A_0x_i))$ and $\lambda_i>0$ for $1\leq i\leq h,(h\in \mathbb{N})$ such that $\sum_{i=1}^h \lambda_i=1$ and $\sum_{i=1}^h \lambda_i y_i^*(Sx_i)=0$ for all $S\in \mathcal{W}.$ Assume that $x\in M_f$ and $f(x)=1.$ Then for each $1\leq i\leq h,~x_i=a_ix+z_i,$ where $a_i\in K$ and $z_i\in Z.$ Without loss of generality, we may assume that $a_i>0$ for all $1\leq i\leq h.$ Then $f(x_i)>0$ and so $y_i^*\in F_{A_0}$ for all $1\leq i \leq h.$ Now, for all $S\in \mathcal{W},~\sum_{i=1}^h \lambda_i y_i^*(Sx_i)=0\Rightarrow \sum_{i=1}^h \lambda_i a_i y_i^*(Sx)=0.$ Let $\{y_1,y_2,\ldots,y_n\}$ be a basis of $\mathbb{Y}.$  For each $1\leq j\leq n,$ define $S_j\in \mathcal{W}$ by $S_j(ax+z)=ay_j,$ where $a\in K,z\in Z.$ Now, $\sum_{i=1}^h \lambda_i a_i y_i^*(S_jx)=0\Rightarrow \sum_{i=1}^h \lambda_i a_i y_i^*(y_j)=0$ for all $1\leq j\leq n.$ Therefore, $\sum_{i=1}^h \lambda_i a_i y_i^*=0.$ Let $\alpha=\sum_{i=1}^{h}\lambda_ia_i>0.$ Then $\sum_{i=1}^{h}\frac{\lambda_ia_i}{\alpha}y_i^*=0\Rightarrow 0\in \mbox{conv}(F_{A_0}).$\\
	 
	Conversely, suppose that $0\in \mbox{conv}(F_{A_0}).$ Then $\sum_{i=1}^h\mu_iy_i^*=0,$ where $\mu_i>0,~\sum_{i=1}^h\mu_i=1$ and $y_i^*\in F_{A_0}.$ Clearly, for each $1\leq i\leq h,$ there exists $x_i\in  M_{A_0}\cap Ext(B_{\mathbb{X}})$  such that  $f(x_i)>0$ and $ y_i^*(A_0 x_i)=\|A_0\|=\|A_0x_i\|.$ Therefore, $y_i^*\in Ext(J(A_0x_i)).$ Now, let $x\in M_f$ and $f(x)=1.$ Let $x_i=a_ix+z_i,$ where $a_i\in K,z_i \in Z.$ Then $f(x_i)>0\Rightarrow a_i>0.$ Let $\alpha=\sum_{i=1}^h\frac{\mu_i}{a_i}>0$ and $\lambda_i=\frac{\mu_i}{a_i\alpha}>0.$ Then for all $S\in \mathcal{W},~\sum_{i=1}^h\lambda_iy_i^*(Sx_i)= \sum_{i=1}^h\frac{\mu_i}{a_i\alpha}y_i^*(Sx_i)=\sum_{i=1}^h\frac{\mu_i}{\alpha}y_i^*(Sx)=\frac{1}{\alpha}(\sum_{i=1}^h\mu_iy_i^*)(Sx)=0.$ Therefore, by Theorem \ref{th-banachfi}, $T\perp_B \mathcal{W}.$  This completes the proof of the theorem. 
\end{proof}

In the following theorem, we generalize \cite[Th. 1]{G}. In fact, in place of $M_n(K),$ we consider $\mathbb{L}(\mathbb{H}),$ where $\mathbb{H}$ is a Hilbert space, not necessary finite dimensional. Here we recall that for every Hilbert space $\mathbb{H},$ $\mathbb{K}(\mathbb{H})$ is always an $M-$ideal in $\mathbb{L}(\mathbb{H}).$

\begin{theorem}\label{th-hil}
	Let $\mathbb{H}$ be a Hilbert space. Suppose $T\in \mathbb{L}(\mathbb{H})$ is such that $\|T\|=1,$ $M_T=S_{H_0},$ where $H_0$ is a finite dimensional subspace of $\mathbb{H}$ and $\|T\|_{H_0^\perp}<\|T\|.$ Then for any subspace $\mathcal{W}$ of $\mathbb{L}(\mathbb{H}),~T\perp_B \mathcal{W}$ if and only if there exist $x_1,x_2,\ldots,x_n\in M_T$ and $\lambda_1,\lambda_2,\ldots,\lambda_n>0$ such that $\sum_{i=1}^n\lambda_i=1$ and  $\sum_{i=1}^n\lambda_i\langle Ax_i,Tx_i\rangle=0$ for all $A\in \mathcal{W}.$ 
\end{theorem}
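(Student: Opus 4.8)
The plan is to reduce Theorem \ref{th-hil} to the finite-dimensional machinery already established, essentially by showing that the orthogonality condition $T \perp_B \mathcal{W}$ depends only on the restriction of the operators to the finite-dimensional reducing structure $H_0$. The sufficiency direction should be completely routine: given the $x_i \in M_T$ and weights $\lambda_i$, I would set $\psi_i = (Tx_i)^* \otimes x_i$ (using that in a Hilbert space the unique supporting functional of $y$ is $\langle \cdot, y\rangle$), note each $\psi_i \in J(T)$ with $\psi_i(T) = \langle Tx_i, Tx_i\rangle = \|T\|^2 = 1$, and form $f = \sum_i \lambda_i \psi_i \in J(T)$. Then $f(A) = \sum_i \lambda_i \langle Ax_i, Tx_i\rangle = 0$ for all $A \in \mathcal{W}$, so $\|T + A\| \geq |f(T+A)| = |f(T)| = 1 = \|T\|$, giving $T \perp_B \mathcal{W}$. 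This mirrors the sufficiency argument in Theorem \ref{th-banachfi} verbatim.

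For necessity, the key structural observation I would exploit is that the hypotheses $M_T = S_{H_0}$ and $\|T|_{H_0^\perp}\| < 1$ force the norm-attainment (and hence the supporting functionals of $T$) to be concentrated on the finite-dimensional subspace $H_0$. The natural move is to restrict attention to operators from $H_0$: I would consider the compression/restriction map $A \mapsto A|_{H_0}$ and the finite-dimensional subspace $\mathcal{W}_0 = \{A|_{H_0} : A \in \mathcal{W}\}$ of $\mathbb{L}(H_0, \mathbb{H})$, together with $T_0 = T|_{H_0}$. Since $H_0$ is finite dimensional it is reflexive, and one checks that $T_0 \perp_B \mathcal{W}_0$ is essentially equivalent to $T \perp_B \mathcal{W}$: because $T$ attains its norm only on $S_{H_0}$ and is strictly norm-decreasing on $H_0^\perp$, perturbing $T$ by any $A \in \mathcal{W}$ can only raise the norm through the action on (vectors near) $H_0$. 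I would then apply Theorem \ref{th-banachfi} to $T_0$ and $\mathcal{W}_0$ to extract $x_i \in M_{T_0} \cap Ext(B_{H_0})$, supporting functionals, and weights $\lambda_i$; since the codomain Hilbert space is smooth, the supporting functional at $T_0 x_i$ is $\langle \cdot, T_0 x_i\rangle = \langle \cdot, Tx_i\rangle$, yielding exactly $\sum_i \lambda_i \langle Ax_i, Tx_i\rangle = 0$. The vectors $x_i$ lie in $S_{H_0} = M_T$, as required.

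The main obstacle I anticipate is making rigorous the reduction ``$T \perp_B \mathcal{W} \iff T_0 \perp_B \mathcal{W}_0$,'' since $\mathbb{H}$ may be infinite dimensional and $\mathcal{W}$ is an arbitrary (possibly infinite-dimensional, even non-closed) subspace of $\mathbb{L}(\mathbb{H})$, whereas Theorem \ref{th-banachfi} requires a \emph{finite-dimensional} subspace and a \emph{finite-dimensional} codomain. The codomain issue means I cannot directly apply the restriction operators into all of $\mathbb{H}$; instead I expect the right framework is Lemma \ref{lemma-wojcik} applied directly to $T \in \mathbb{L}(\mathbb{H})$, using that $\mathbb{K}(\mathbb{H})$ is an $M$-ideal in $\mathbb{L}(\mathbb{H})$ and that $\mathrm{dist}(T, \mathbb{K}(\mathbb{H})) = \|T|_{H_0^\perp}\| < 1$ under the stated hypotheses. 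This gives the extreme-point description of $J(T)$ concentrated on $M_T \cap Ext(B_{\mathbb{H}}) = S_{H_0} \cap Ext(B_{H_0})$, reducing the supporting-functional analysis to the finite-dimensional unit ball of $H_0$.

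Concretely, for necessity I would run the argument of Theorem \ref{th-banachfi} but with $\mathcal{W}$ replaced by the finite-dimensional space $\mathcal{W}_0$ obtained by compressing: the functional $f$ on $\mathcal{Z} = \mathrm{Span}\{T, \mathcal{W}\}$ need only be analyzed on a finite-dimensional quotient because every supporting functional of $T$ factors through evaluation at vectors of $H_0$ by Lemma \ref{lemma-wojcik}. After applying Singer's lemmas (Lemma \ref{lemma-singer} and Lemma \ref{lemma-singer2}) to decompose $f|_{\mathcal{Z}'}$ for a suitable finite-dimensional $\mathcal{Z}'$ into extreme points of $J(T)$, each extreme point is of the form $(Tx_i)^* \otimes x_i$ with $x_i \in S_{H_0}$, and the condition $f(A) = 0$ for all $A \in \mathcal{W}$ translates directly into $\sum_i \lambda_i \langle Ax_i, Tx_i\rangle = 0$. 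The finiteness needed to invoke Singer's lemma is supplied not by $\mathcal{W}$ but by the finite dimensionality of $H_0$ and the resulting finite-dimensional span of the relevant extreme functionals, which is the crux that lets an arbitrary subspace $\mathcal{W}$ be handled.
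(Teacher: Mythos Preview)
Your proposal eventually converges to the paper's own argument, but the route is circuitous and one step is glossed over. The initial plan of restricting to $H_0$ and applying Theorem~\ref{th-banachfi} to $T_0 \perp_B \mathcal{W}_0$ does not work as stated, as you yourself note: the codomain is infinite dimensional and $\mathcal{W}_0$ need not be finite dimensional, so neither hypothesis of Theorem~\ref{th-banachfi} is met. You then pivot to the correct strategy, which is exactly what the paper does: use that $\mathbb{K}(\mathbb{H})$ is an $M$-ideal, verify $\mathrm{dist}(T,\mathbb{K}(\mathbb{H}))<1$ (your claim of equality with $\|T|_{H_0^\perp}\|$ should be an inequality, obtained by subtracting the finite-rank operator $S$ equal to $T$ on $H_0$ and $0$ on $H_0^\perp$; the paper instead argues by contradiction via \cite[Th.~3.1]{PSG}), and then invoke Lemma~\ref{lemma-wojcik} to identify $Ext(J(T))$ with the rank-one functionals $x\otimes Tx$, $x\in S_{H_0}$.

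The one genuine gap in your sketch is the passage from $Ext(J(T))$ to an arbitrary $f\in J(T)$. You write that ``every supporting functional of $T$ factors through evaluation at vectors of $H_0$ by Lemma~\ref{lemma-wojcik},'' but that lemma only describes the \emph{extreme} supporting functionals. To conclude that the Hahn--Banach extension $f\in J(T)$ lies in $\mathrm{Span}(Ext(J(T)))$ you need an extra step: Krein--Milman gives $J(T)=\overline{\mathrm{conv}}^{w^*}(Ext(J(T)))$, and then the finite dimensionality of $\mathrm{Span}(Ext(J(T)))$ (which follows from expanding $x\otimes Tx$ over an orthonormal basis $e_1,\dots,e_k$ of $H_0$ to get $\mathrm{Span}\{e_i\otimes Te_j\}$) collapses the weak-$*$ closure to the ordinary span. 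Only then can Singer's Lemma~\ref{lemma-singer} be applied inside that finite-dimensional space to decompose $f$ as a convex combination of extreme points. This is precisely the crux that handles an arbitrary (possibly infinite-dimensional) $\mathcal{W}$, and the paper makes it explicit; your final paragraph gestures at it but does not name the Krein--Milman step.
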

\begin{proof}
	We prove the theorem for complex Hilbert space. For real Hilbert space, the proof follows similarly. We first prove the necessary part. We claim that $dist(T,\mathbb{K}(\mathbb{H}))<1.$ If possible, suppose that this is not true. Then for every $S\in \mathbb{K}(\mathbb{H}),~dist(T,\text{Span}\{S\})\geq dist (T,\mathbb{K}(\mathbb{H}))\geq 1,$ i.e., for every $\lambda\in K,\|T-\lambda S\|\geq dist(T,\text{Span}\{S\})\geq 1\Rightarrow T\perp_B S.$ Define $S:\mathbb{H}\to\mathbb{H}$ by $Sx=Tx$ whenever $x\in H_0$ and $Sx=0,$ whenever $x\in H_0^\perp.$ Then clearly, $S\in \mathbb{K}(\mathbb{H}),$ since $H_0$ is finite dimensional. Hence, $T\perp_B S.$ By \cite[Th. 3.1]{PSG}, there exists $x\in M_T=S_{H_0}$ such that $Tx\perp Sx\Rightarrow Tx\perp Tx,$ a contradiction. This proves the claim. 
	
	Now, we show that Span $Ext(J(T))$ is finite dimensional. Since $\mathbb{K}(\mathbb{H})$ is an $M-$ideal in $\mathbb{L}(\mathbb{H})$ and $dist(T,\mathbb{K}(\mathbb{H}))<1,$ by Lemma \ref{lemma-wojcik}, $Ext(J(T))=\{y^*\otimes x:x\in M_T,y^*\in Ext (J(Tx))\},$ where $y^*\otimes x (S)=y^*(Sx)=\langle Sx,Tx\rangle$ for every $S\in \mathbb{L}(\mathbb{H}).$ So we can write $Ext(J(T))=\{x\otimes Tx:x\in M_T\},$ where $x\otimes Tx(S)=\langle Sx,Tx\rangle$ for every $S\in \mathbb{L}(\mathbb{H}).$  Suppose that $\{e_1,e_2,\ldots,e_k\}$ is an orthonormal basis of $H_0.$ Then 
	\begin{eqnarray*}
	&&\text{Span}~ Ext(J(T))\\
	&=&\text{Span}~  \{x\otimes Tx:x\in M_T\}\\
	&=&\text{Span}~ \{\sum_{i,j=1}^ka_i\overline{a_j}e_i\otimes Te_j:\sum_{i=1}^k|a_i|^2=1\}\\
	&=& \text{Span}~  \{e_i\otimes Te_j:1\leq i,j\leq k\}.
	\end{eqnarray*}
	Therefore, Span $Ext(J(T))$ is finite dimensional.
	
	Next, we construct $f\in J(T)$ such that $f(A)=0$ for all $A\in \mathcal{W}.$ Let $\mathcal{Z}=\text{Span} \{T,\mathcal{W}\}.$ Define $g:\mathcal{Z}\to \mathbb{C}$ by $g(\alpha T+A)=\alpha,$ where $\alpha \in \mathbb{C}$ and $A\in \mathcal{W}.$ Since $T\perp_B\mathcal{W},~ g\in S_{\mathcal{Z}^*}.$ By Hahn-Banach theorem, there exists $f\in \mathbb{L}(\mathbb{H})^*$ such that $\|f\|=\|g\|=1$ and $f|_{\mathcal{Z}}=g.$ Therefore, $f(T)=1\Rightarrow f\in J(T)$ and $f(A)=0$ for all $A\in \mathcal{W}.$ Since $J(T)$ is weak*compact, convex subset of $\mathbb{L}(\mathbb{H})^*,$ using Krein-Milman Theorem, we get, $J(T)=\overline{\mbox{conv}}^{w*} (Ext( J(T)))\subseteq \overline{\mbox{Span} (Ext(J(T)))}^{w*}= \overline{\mbox{Span} (Ext(J(T)))}=\mbox{Span} (Ext(J(T))),$ since $\mbox{Span} (Ext(J(T)))$ is finite dimensional. Now, using Lemma \ref{lemma-singer}, we get extreme points $f_1,f_2,\ldots,f_n$ of the unit ball of $ \mbox{Span} (Ext(J(T)))$ and $\lambda_1,\lambda_2,$ $\ldots,\lambda_n >0$ such that $\sum_{i=1}^n \lambda_i=1$ and $f=\sum_{i=1}^n\lambda_i f_i.$ It can be easily shown that $f_i\in Ext(J(T))$ for all $1\leq i\leq n.$ So for $1\leq i\leq n,$ there exist $x_i\in M_T$ such that $f_i=x_i\otimes Tx_i.$ Now, for all $A\in \mathcal{W},$ $f(A)=0\Rightarrow \sum_{i=1}^n\lambda_i f_i(A)=0\Rightarrow \sum_{i=1}^n\lambda_i x_i\otimes Tx_i(A)=0\Rightarrow \sum_{i=1}^n\lambda_i \langle Ax_i,Tx_i\rangle=0.$ This completes the proof of the necessary part of the theorem.
	
	For the sufficient part, suppose that $\sum_{i=1}^n\lambda_i\langle Ax_i,Tx_i\rangle=0$ for all $A\in \mathcal{W},$ where $\sum_{i=1}^n\lambda_i=1$ and $\lambda_i>0$ for $1\leq i\leq n.$ Then $\sum_{i=1}^n\lambda_i x_i\otimes Tx_i (A)=0.$ Now, $x_i\otimes Tx_i\in J(T)$ and $J(T)$ is convex, so $\sum_{i=1}^n\lambda_i x_i\otimes Tx_i\in J(T).$ Let $f=\sum_{i=1}^n\lambda_i x_i\otimes Tx_i.$ Then for every $A\in \mathcal{W},\|T+A\|\geq |f(T+A)|=|f(T)|=1\Rightarrow T\perp_B\mathcal{W}.$ This completes the proof of the theorem.  
\end{proof}

We now generalize Theorem \ref{th-banachfi} by considering the range space as an arbitrary Banach spaces instead of taking finite dimensional Banach space. For this purpose, we need to assume some restrictions on the norm attainment set of the operator. Let us recall that  for  $x\in S_{\mathbb{X}},$ if $\mbox{Span}(J(x))$ is finite dimensional and  dimension of $\mbox{Span}(J(x))$ is $m,$ then $x$ is said to be $m-$smooth \cite{KS}.  
\begin{theorem}\label{th-banachinf}
	Suppose $\mathbb{X}$ is a reflexive Banach space and $\mathbb{Y}$ is arbitrary Banach space. Let $\mathbb{K}(\mathbb{X},\mathbb{Y})$ be an $M-$ideal in $\mathbb{L}(\mathbb{X},\mathbb{Y}).$ Let $T\in \mathbb{L}(\mathbb{X},\mathbb{Y})$ be such that the following conditions hold:\\
	(i) $\|T\|=1$ and dist$(T,\mathbb{K}(\mathbb{X},\mathbb{Y}))<1,$ \\
	(ii)  $|M_T\cap Ext(B_{\mathbb{X}})|<\infty,$ \\
	(iii) $Tx$ is $m_x$-smooth for some $m_x<\infty,$ for each $x\in M_T\cap Ext(B_{\mathbb{X}}).$\\
	Then for a subspace $\mathcal{W}$ of $\mathbb{L}(\mathbb{X},\mathbb{Y}),$ $T\perp_B \mathcal{W}$ if and only if  there exist $x_i\in M_T\cap Ext(B_{\mathbb{X}}),y_i^*\in Ext(J(Tx_i))$ and $\lambda_i>0$ for $1\leq i\leq n,(n\in \mathbb{N})$ such that $\sum_{i=1}^n \lambda_i=1$ and $\sum_{i=1}^n \lambda_i y_i^*(Ax_i)=0$ for all $A\in \mathcal{W}.$
\end{theorem}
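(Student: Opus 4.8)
The plan is to mirror the structure of the proof of Theorem~\ref{th-banachfi}, using Lemmas~\ref{lemma-singer}, \ref{lemma-singer2} and \ref{lemma-wojcik}, but with extra care because $\mathbb{Y}$ is no longer finite dimensional. For the necessary part, I would set $\mathcal{Z}=\mathrm{Span}\{T,\mathcal{W}\}$ and define the functional $f:\mathcal{Z}\to K$ by $f(\alpha T+A)=\alpha$. Since $T\perp_B\mathcal{W}$ forces $\|\alpha T+A\|\geq|\alpha|=|f(\alpha T+A)|$, we get $f\in S_{\mathcal{Z}^*}$. The key new step is that I cannot apply Lemma~\ref{lemma-singer} directly to $\mathcal{Z}$ (which may be infinite dimensional); instead I want to first extend $f$ to a functional on $\mathbb{L}(\mathbb{X},\mathbb{Y})$ lying in $J(T)$, and then use conditions (i)--(iii) to show that $\mathrm{Span}(Ext\,J(T))$ is finite dimensional, exactly as in the proof of Theorem~\ref{th-hil}.

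Here is where conditions (i)--(iii) do the work. By (i) and the $M$-ideal hypothesis, Lemma~\ref{lemma-wojcik} gives
\[
Ext\,J(T)=\{y^*\otimes x: x\in M_T\cap Ext(B_{\mathbb{X}}),\ y^*\in Ext\,J(Tx)\}.
\]
Condition (ii) says there are only finitely many admissible $x$, and condition (iii) says that for each such $x$ the set $J(Tx)$ spans an $m_x$-dimensional space, so $Ext\,J(Tx)$ is finite. Hence $Ext\,J(T)$ is a \emph{finite} set, and in particular $\mathrm{Span}(Ext\,J(T))$ is finite dimensional. Now I would extend $f$ by Hahn--Banach to $\tilde f\in\mathbb{L}(\mathbb{X},\mathbb{Y})^*$ with $\|\tilde f\|=1$, so that $\tilde f(T)=1$ gives $\tilde f\in J(T)$ and $\tilde f(A)=0$ for all $A\in\mathcal{W}$. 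Since $J(T)$ is weak*-compact and convex, Krein--Milman gives $J(T)=\overline{\mathrm{conv}}^{w*}(Ext\,J(T))\subseteq\mathrm{Span}(Ext\,J(T))$, the last equality holding because the span is finite dimensional and hence weak*-closed. Thus $\tilde f$ lies in this finite dimensional space, and applying Lemma~\ref{lemma-singer} to $\tilde f$ as a point of the unit sphere of $\mathrm{Span}(Ext\,J(T))$ yields extreme points $f_1,\dots,f_n$ of its unit ball and weights $\lambda_i>0$ summing to $1$ with $\tilde f=\sum_i\lambda_i f_i$. A short argument (as in Theorem~\ref{th-hil}) shows each $f_i\in Ext\,J(T)$, so $f_i=y_i^*\otimes x_i$ with $x_i\in M_T\cap Ext(B_{\mathbb{X}})$ and $y_i^*\in Ext\,J(Tx_i)$. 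Evaluating $\tilde f(A)=0$ for $A\in\mathcal{W}$ then gives $\sum_i\lambda_i y_i^*(Ax_i)=0$.

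The sufficient part is routine and identical in spirit to Theorem~\ref{th-banachfi}: if such $x_i,y_i^*,\lambda_i$ exist, set $f=\sum_i\lambda_i\,y_i^*\otimes x_i$. Each $y_i^*\otimes x_i\in Ext\,J(T)\subseteq J(T)$, and $J(T)$ is convex, so $f\in J(T)$; the orthogonality relation gives $f(A)=\sum_i\lambda_i y_i^*(Ax_i)=0$, whence $\|T+A\|\geq|f(T+A)|=|f(T)|=1=\|T\|$ for every $A\in\mathcal{W}$, i.e.\ $T\perp_B\mathcal{W}$.

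The main obstacle, and the only genuinely new point compared with Theorem~\ref{th-banachfi}, is controlling the extension step when $\mathbb{Y}$ is infinite dimensional: one can no longer simply decompose $f$ on the finite dimensional space $\mathcal{Z}$ via Lemma~\ref{lemma-singer}. The device that resolves this is to push the decomposition onto $\mathrm{Span}(Ext\,J(T))$ instead, and the entire force of hypotheses (ii) and (iii) is precisely to guarantee that this span is finite dimensional (equivalently, that $Ext\,J(T)$ is finite). I would double-check the claim that $f_i\in Ext\,J(T)$ rather than merely being an extreme point of the unit ball of the span—this follows because $Ext\,J(T)$ already sits on the unit sphere of the span and $J(T)$ is an extreme-point-preserving face structure, but it deserves an explicit line, exactly as written in the proof of Theorem~\ref{th-hil}.
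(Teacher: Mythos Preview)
Your proposal matches the paper's proof essentially step for step: Hahn--Banach extend the functional on $\mathcal{Z}$, use Lemma~\ref{lemma-wojcik} together with (ii)--(iii) to show $\mathrm{Span}(Ext\,J(T))$ is finite dimensional, invoke Krein--Milman to place the extension in this span, and then decompose via Lemma~\ref{lemma-singer} exactly as in Theorem~\ref{th-hil}. One small correction: condition (iii) does \emph{not} force $Ext\,J(Tx)$ to be a finite set (e.g.\ $J(Tx)$ could be a disk in a two-dimensional span), only that $\mathrm{Span}(Ext\,J(Tx))\subseteq\mathrm{Span}(J(Tx))$ has dimension at most $m_x$; but this weaker fact is all the argument requires, and it is precisely what the paper uses.
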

\begin{proof}
	The sufficient part follows similarly as in Theorem \ref{th-banachfi}. We only prove the necessary part of the theorem. First observe that Span $Ext (J(T))$ is finite dimensional. In fact, Span $Ext(J(T))=$ Span $\{y^*\otimes x:x\in M_T\cap Ext(B_{\mathbb{X}}), y^*\in Ext(J(Tx))\}.$ Now, $(ii)$ and $(iii)$ imply that Span $Ext (J(T))$ is finite dimensional. Let $\mathcal{Z}=Span \{T,\mathcal{W}.\}$ Now, as in Theorem \ref{th-banachfi}, we define a linear functional $g$ on $\mathcal{Z}$ by $g(\alpha T+A)=1,$ where $\alpha \in K$ and $A\in \mathcal{W}.$ Using $T\perp_B \mathcal{W},$ it can be shown that $\|g\|=1.$ Therefore, by Hahn-Banach theorem, there exists $f\in \mathbb{L}(\mathbb{X},\mathbb{Y})^*$ such that $\|f\|=1$ and $f|_{\mathcal{Z}}=g.$ Clearly, $f\in J(T).$ Since $J(T)$ is weak*compact, convex subset of $\mathbb{L}(\mathbb{X},\mathbb{Y})^*,$ using Krein-Milman Theorem, we get, $J(T)=\overline{\mbox{conv}}^{w*} (Ext( J(T)))\subseteq \overline{\mbox{Span} (Ext(J(T)))}^{w*}= \overline{\mbox{Span} (Ext(J(T)))}=\mbox{Span} (Ext(J(T))),$ since $\mbox{Span} (Ext(J(T)))$ is finite dimensional. Now, proceeding similarly as in Theorem \ref{th-hil}, we get $f_1,f_2,\ldots,f_n\in Ext(J(T))$ and $\lambda_1,\lambda_2,\ldots,\lambda_n>0$ such that $\sum_{i=1}^n \lambda_i=1$ and $f=\sum_{i=1}^n\lambda_i f_i.$ By Lemma \ref{lemma-wojcik}, $f_i=y_i^*\otimes x_i,$ where $x_i\in M_T\cap Ext(B_{\mathbb{X}})$ and $y_i^*\in Ext(J(Tx_i))$ for each $1\leq i\leq n.$ Now, for each $A\in \mathcal{W}, f(A)=g(A)=0.$ Thus, $\sum_{i=1}^n\lambda_i f_i(A)=0\Rightarrow \sum_{i=1}^n\lambda_i y_i^*\otimes x_i (A)=0\Rightarrow \sum_{i=1}^n \lambda_i y_i^* (Ax_i)=0.$ This completes the proof of the theorem.
\end{proof}

\begin{remark}
	Let $\mathbb{H}$ be a Hilbert space and $T,A\in \mathbb{L}(\mathbb{H})$ such that $0\notin \sigma_{\text{app}}A.$ Following the notation of \cite{P}, we say that 
	\[M_T(A)=\sup_{\|x\|=1}\{\|Tx-\frac{\langle Tx,Ax\rangle}{\|Ax\|^2}Ax\|\}.\]
	Then by \cite[Th. 4]{P}, $M_T(A)^2=\sup\{g(T^*T)-\frac{|g(A^*T)|^2}{g(A^*A)}:g \text{ is a state and } g(A^*A)\neq 0\}.$ From \cite[Th. 4]{PHD}, $M_T(A)=dist(T,\text{Span}\{A\}).$ Therefore, if $g$ is a state such that $g(A^*A)\neq 0,$ then 
	\begin{equation}\label{eq-p}
	g(T^*T)-\frac{|g(A^*T)|^2}{g(A^*A)}\leq dist(T,\text{Span}\{A\})^2.
	\end{equation}
	In \cite[Th. 3.3]{BS}, Bhatia and Sharma proved that for any positive unital linear map $\phi:M_n(K)\to M_k(K)$ and $A\in M_n(K),$
	\begin{equation}\label{eq-bs}
	\phi(A^*A)-\phi(A)^*\phi(A)\leq dist(T,\text{Span}\{I\})^2.
	\end{equation}
	In the same paper, considering different positive unital linear maps the authors obtained many lower bounds for the distance of a matrix $T$ form Span$\{I\},$ using Equation \ref{eq-bs}.
	
	Now, considering different states on $\mathbb{L}(\mathbb{H}),$ we obtain lower bounds for distance of an operator $T$ from Span$\{A\},$ using Equation \ref{eq-p}. For example, suppose that $\mathbb{H}=\ell_2^n,$ i.e., $\mathbb{L}(\mathbb{H})=M_n(K).$
	
	$(i)$ Let each row of $A$ is non-zero. Then for each $1\leq j\leq n,$ consider $g_j:M_n(K)\to K$ defined by $g_j(S)=s_{jj},$ where $S=(s_{ij})\in M_n(K).$ Then 
	\[dist(T,\text{Span}\{A\})^2\geq \sum_{i=1}^n|t_{ij}|^2-\frac{\sum_{i=1}^n|\overline{t_{ij}}a_{ij}|^2}{\sum_{i=1}^n|a_{ij}|^2}.\] 
	Therefore,
		\[dist(T,\text{Span}\{A\})^2\geq \max_{1\leq j\leq n}\Big\{ \sum_{i=1}^n|t_{ij}|^2-\frac{\sum_{i=1}^n|\overline{t_{ij}}a_{ij}|^2}{\sum_{i=1}^n|a_{ij}|^2}\Big\}.\]  
	$(ii)$ Similarly, if $\|A\|_F\neq 0,$ where $\|A\|_F^2=tr(A^*A),$ and we consider  $g:\mathbb{L}(\mathbb{H})\to K$ defined by $g(S)=\frac{1}{n}tr(S),$ then we get 
	\[dist(T,\text{Span}\{A\})^2\geq \frac{1}{n}\|T\|_F^2-\frac{1}{n} \frac{|tr(T^*A)|^2}{\|A\|^2_F}.\]
\end{remark}

In \cite{MPS}, the authors obtained characterization of ``numerical radius orthogonality ($\perp_w$)", i.e., Birkhoff-James orthogonality in $\mathbb{L}(\mathbb{H})$ with respect to numerical radius norm. Here we obtain a sufficient condition for numerical radius orthogonality of an operator in $\mathbb{L}(\mathbb{H})$ to a subspace of $\mathbb{L}(\mathbb{H}).$ If $f\in (\mathbb{L}(\mathbb{H}),w(.))^*,$ then we denote the norm of $f$ by $\|f\|_w.$

\begin{theorem}\label{th-suffnu}
	Suppose $\mathbb{H}$ is a complex Hilbert space and $T(\neq 0)\in \mathbb{L}(\mathbb{H}).$ Let $\mathcal{W}$ be a subspace of $\mathbb{L}(\mathbb{H}).$ Let there exist $x_1,x_2,\ldots,x_n\in S_{\mathbb{H}}$ and $\lambda_1,\lambda_2,\ldots,\lambda_n>0$ such that $|\langle Tx_i,x_i\rangle|=w(T)$ for all $1\leq i\leq n,$ $\sum_{i=1}^n\lambda_i=1$ and 
	\[\sum_{i=1}^n\lambda_i\overline{\langle Tx_i,x_i\rangle}\langle Ax_i,x_i\rangle=0 ~\text{for all }A\in \mathcal{W}.\]
	Then $T\perp_w \mathcal{W}.$
\end{theorem}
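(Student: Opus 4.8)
The plan is to mimic the sufficiency arguments of Theorems \ref{th-banachfi} and \ref{th-hil} by exhibiting a single functional on $(\mathbb{L}(\mathbb{H}),w(\cdot))$ that supports $T$, has $w$-norm at most one, and annihilates $\mathcal{W}$; once such a functional is in hand the orthogonality is immediate. Since $\mathbb{H}$ is complex, $w(\cdot)$ is a genuine norm, so $T\neq 0$ forces $w(T)>0$ and we may normalize by it. For each $i$ I would set $\alpha_i=\langle Tx_i,x_i\rangle$, so that $|\alpha_i|=w(T)$ by hypothesis, and define $f\colon\mathbb{L}(\mathbb{H})\to\mathbb{C}$ by
\[
f(S)=\frac{1}{w(T)}\sum_{i=1}^n\lambda_i\,\overline{\alpha_i}\,\langle Sx_i,x_i\rangle,
\]
which is clearly linear.

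The key step is to estimate $\|f\|_w$. For any $S$, the definition of the numerical radius gives $|\langle Sx_i,x_i\rangle|\le w(S)$ since each $x_i\in S_{\mathbb{H}}$; combining this with $|\alpha_i|=w(T)$ and $\sum_i\lambda_i=1$ yields $|f(S)|\le \frac{1}{w(T)}\sum_i\lambda_i\,w(T)\,w(S)=w(S)$, so $\|f\|_w\le 1$. Next, $f(T)=\frac{1}{w(T)}\sum_i\lambda_i|\alpha_i|^2=\frac{1}{w(T)}\,w(T)^2\sum_i\lambda_i=w(T)$, so in fact $\|f\|_w=1$. Finally, the annihilation hypothesis $\sum_i\lambda_i\overline{\alpha_i}\langle Ax_i,x_i\rangle=0$ says precisely that $f(A)=0$ for every $A\in\mathcal{W}$.

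With $f$ so constructed, for each $A\in\mathcal{W}$ one has
\[
w(T+A)\ge|f(T+A)|=|f(T)+f(A)|=|w(T)|=w(T),
\]
which is exactly $T\perp_w\mathcal{W}$. I do not expect a genuine obstacle here: the entire content is the choice of $f$ together with the elementary bound $|\langle Sx_i,x_i\rangle|\le w(S)$. The only point demanding a little care is the standing hypothesis $T\neq 0$ combined with the fact that $w(\cdot)$ is a norm on the complex space $\mathbb{L}(\mathbb{H})$, which guarantees $w(T)>0$ and thereby legitimizes the normalization; without complexness (or positivity of $w(T)$) the construction of $f$ would break down.
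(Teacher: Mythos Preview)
Your proof is correct and follows essentially the same approach as the paper: both construct the functional $f(S)=\frac{1}{w(T)}\sum_{i}\lambda_i\overline{\langle Tx_i,x_i\rangle}\langle Sx_i,x_i\rangle$, verify $\|f\|_w\le 1$ and $f(T)=w(T)$, and use $f(A)=0$ for $A\in\mathcal{W}$ to conclude. The only cosmetic difference is that the paper first defines the individual summands $f_i$ and invokes convexity of $J_w(T)$, whereas you form the combination directly.
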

\begin{proof}
	For each $1\leq i\leq n,$ define linear functional $f_i:\mathbb{L}(\mathbb{H})\to \mathbb{C}$ as follows:
	\[f_i(S)=\frac{1}{w(T)}\overline{\langle Tx_i,x_i\rangle}\langle Sx_i,x_i\rangle, \text{ for all }S\in \mathbb{L}(\mathbb{H}).\]
	Clearly, $|f_i(S)|\leq w(S)$ for all $S\in \mathbb{L}(\mathbb{H}),$ i.e., $\|f_i\|_w\leq 1.$ Now, $|f_i(T)|=w(T)\Rightarrow \|f_i\|_w=1.$ Consider the set 
	\[J_w(T)=\{f\in (\mathbb{L}(\mathbb{H}),w(.))^*:\|f\|_w=1 ~ \&~ f(T)=w(T) \}.\]
	$J_w(T)$ is a convex set and $f_i\in J_w(T)$ for all $1\leq i\leq n.$ Therefore, $\sum_{i=1}^{n}\lambda_i f_i\in J_w(T).$ Let $f=\sum_{i=1}^{n}\lambda_i f_i.$ Then for $A\in \mathcal{W},$ 
	$$f(A)=\sum_{i=1}^{n}\lambda_i f_i(A)=\frac{1}{w(T)}\sum_{i=1}^{n}\lambda_i \overline{\langle Tx_i,x_i\rangle}\langle Ax_i,x_i\rangle=0.$$
	Now, for each $\lambda\in \mathbb{C},~w(T+\lambda A)\geq |f(T+\lambda A)|=|f(T)|=w(T).$ Hence $T\perp_wA\Rightarrow T\perp_w\mathcal{W}.$ This completes the proof of the theorem.
\end{proof}

\bibliographystyle{amsplain}

\end{document}